\DeclareMathOperator*{\lcm}{lcm}
\begin{document}
\theoremstyle{plain}
\newtheorem{theorem}{Theorem}[section]
\newtheorem{main}{Main Theorem}
\newtheorem{proposition}[theorem]{Proposition}
\newtheorem{corollary}[theorem]{Corollary}
\newtheorem{lemma}[theorem]{Lemma}
\newtheorem{conjecture}[theorem]{Conjecture}
\newtheorem{claim}[theorem]{Claim}
\newtheorem{fact}[theorem]{Fact}
\newtheorem{question}[theorem]{Question}
\newtheorem*{st}{Statements}	

\numberwithin{equation}{section}

\theoremstyle{definition}
\newtheorem{definition}[theorem]{Definition}
\newtheorem{notation}[theorem]{Notation}
\newtheorem{convention}[theorem]{Convention}
\newtheorem{example}[theorem]{Example}
\newtheorem{remark}[theorem]{Remark}
\newtheorem*{ac}{Acknowledgements}	

\newcommand{\one}{{\bf 1}} 
\newcommand{\Rep}{\mathrm{Rep}}
\newcommand{\Gal}{\mathrm{Gal}}
\newcommand{\Id}{\mathrm{id}}
\newcommand{\Aut}{\mathrm{Aut}}
\newcommand{\ch}{\mathrm{ch}}
\newcommand{\VVec}{\mathrm{Vec}}
\newcommand{\PSU}{\mathrm{PSU}}
\newcommand{\SO}{\mathrm{SO}}
\newcommand{\SU}{\mathrm{SU}}
\newcommand{\SL}{\mathrm{SL}}
\newcommand{\FPdim}{\mathrm{FPdim}}
\newcommand{\FPdims}{\mathrm{FPdims}}
\newcommand{\Tr}{\mathrm{Tr}}
\newcommand{\ord}{\mathrm{ord}}
\newcommand{\customcite}[2]{\cite[#2]{#1}}
\newcommand{\mC}{\mathcal{C}}
\newcommand{\mB}{B}
\newcommand{\hc}{\hom_{\mC}}
\newcommand{\id}{{\bf 1}} 
\newcommand{\spec}{i_0} 
\newcommand{\Sp}{i_0} 
\newcommand{\SpecS}{I_{s}} 
\newcommand{\SpS}{I_{s}'} 
\newcommand{\field}{\mathbb{K}} 
\newcommand{\white}{\textcolor{red}{\bullet}} 
\newcommand{\black}{\bullet} 
\newcommand{\proofsketch}{ \noindent \textit{Proof sketch.} }
\newenvironment{restatetheorem}[1]
  {
   \par\addvspace{0.25\baselineskip}
   \noindent\textbf{Theorem \ref{#1}.}\ \itshape
  }
  {
   \par\addvspace{0.25\baselineskip}
  }

\newcommand{\sebastien}[1]{\textcolor{blue}{#1 - Sebastien}}

\newcommand{\RR}{\mathbb{R}}
\newcommand{\CC}{\mathbb{C}}
\newcommand{\QQ}{\mathbb{Q}}
\newcommand{\ZZ}{\mathbb{Z}}

\newcommand{\Normaliz}{\textsf{Normaliz}}
\newcommand{\SageMath}{\textsf{SageMath}}
\newcommand{\GAP}{\textsf{GAP}}

\title{A new criterion for integral modular categorification}

\author{Jingcheng Dong}
\address{J.~Dong, College of Mathematics and Statistics, Nanjing University of Information Science and Technology, Nanjing 210044, China}
\email{jcdong@nuist.edu.cn}

\author{Sebastien Palcoux}
\address{S.~Palcoux, Beijing Institute of Mathematical Sciences and Applications, Huairou District, Beijing, China}
\email{sebastien.palcoux@gmail.com}
\urladdr{https://sites.google.com/view/sebastienpalcoux}

\maketitle

\begin{abstract} 
A generalization of an argument due to Etingof--Nikshych--Ostrik yields a highly efficient necessary criterion for integral modular categorification. This criterion allows us to complete the classification of categorifiable integral modular data up to rank~$14$, and up to rank~$25$ in the odd-dimensional case.
\end{abstract}

\tableofcontents

\section{Introduction}
The classification of integral modular fusion categories is a central and profoundly challenging problem in the theory of tensor categories, with deep connections to representation theory, conformal field theory, and topological quantum computation. Although a complete classification remains out of reach, significant progress has been recently achieved in low ranks~\cite{ABPP,BrRo,CzPl,CzPl2,NRW23}. The main result of Czenky \emph{et~al.}~\cite{CzPl,CzPl2}, completed in~\cite{ABPP,GPR24}, is the classification of all odd-dimensional modular fusion categories of rank less than~$25$. Furthermore, they proved in~\cite{CzPl2} that every non-pointed and non-perfect modular fusion category of rank~$25$ is equivalent to some $\mathcal{Z}(\VVec(C_7 \rtimes C_3,\omega))$. The principal result of Alekseyev \emph{et~al.}~\cite{ABPP} is the classification of all categorifiable integral modular data up to rank~$13$. They also provided exhaustive lists of candidate types for rank~$14$, and for rank~$25$ in the odd-dimensional case, although a substantial portion of these cases remained unresolved.

This paper provides the decisive breakthrough needed to complete this classification. We introduce a powerful new necessary criterion that generalizes a key argument from the proof of~\cite[Lemma~9.3]{ENO11}. It proves remarkably effective, allowing us to eliminate the vast majority of the remaining candidate types.

The core of our result is Theorem~\ref{thm:ENOcrit2}, which establishes a constraint linking the Frobenius-Perron dimensions of simple objects $X$ in the adjoint subcategory to those of simple objects whose dimensions are coprime to a prime divisor $p$ of $\FPdim(X)$, where $p$ is coprime to $\FPdim(\mathcal{C}_{pt})$.

\begin{theorem} \label{thm:ENOcrit2}
Let $\mathcal{C}$ be an integral modular fusion category. 
Assume there exists a non-invertible simple object $X \in \mathcal{C}_{ad}$ 
and a prime divisor $p$ of $\FPdim(X)$ that is coprime to $\FPdim(\mathcal{C}_{pt})$. 
Then there exists a non-invertible simple object $Y \in \mathcal{C}$ such that $p$ is coprime to $\FPdim(Y)$ and 
\[
\lcm(\FPdim(X), \FPdim(Y))^2 + \FPdim(X)^2 \FPdim(\mathcal{C}_{pt}) \le \FPdim(\mathcal{C}).
\]
\end{theorem}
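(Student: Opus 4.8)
Here is the approach I would take, which isolates an orthogonality identity, an arithmetic divisibility, an existence argument via reduction modulo $p$, and a Galois upgrade.

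The plan is to combine unitarity of the $S$-matrix with a Galois-theoretic divisibility argument, in the spirit of \cite[Lemma~9.3]{ENO11}. Normalise the spherical structure so that $S_{\mathbf{1}Z}=\FPdim(Z)$ for every simple $Z$, which is legitimate since an integral category is pseudo-unitary. As $\mathcal{C}$ is modular, $\mathcal{C}_{ad}=(\mathcal{C}_{pt})'$ is the M\"uger centraliser of $\mathcal{C}_{pt}$, so $X\in\mathcal{C}_{ad}$ centralises every invertible $g$; the double braiding is then trivial and $S_{Xg}=\FPdim(X)\FPdim(g)=\FPdim(X)$. Feeding this into the orthogonality relation $\sum_{Z}|S_{XZ}|^{2}=\FPdim(\mathcal{C})$ and peeling off the invertible objects yields the key identity
\[
\sum_{Z\ \mathrm{non\text{-}invertible}}|S_{XZ}|^{2}=\FPdim(\mathcal{C})-\FPdim(\mathcal{C}_{pt})\,\FPdim(X)^{2}.
\]
Hence it suffices to exhibit a single non-invertible simple $Y$ with $p\nmid\FPdim(Y)$ and $|S_{XY}|^{2}\ge\lcm(\FPdim(X),\FPdim(Y))^{2}$, as that one term is already bounded by the left-hand side.

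For the arithmetic input, note that $S_{XZ}/\FPdim(Z)$ is an eigenvalue of the fusion matrix $N_{X}$ and $S_{XZ}/\FPdim(X)$ an eigenvalue of $N_{Z}$, so both are algebraic integers; thus $\FPdim(X)$ and $\FPdim(Z)$ each divide $S_{XZ}$ in $\overline{\mathbb{Z}}$. A B\'ezout relation $\gcd(\FPdim(X),\FPdim(Z))=u\FPdim(X)+v\FPdim(Z)$ shows $\lcm(\FPdim(X),\FPdim(Z))$ also divides $S_{XZ}$ in $\overline{\mathbb{Z}}$, so $\beta_{Z}:=S_{XZ}/\lcm(\FPdim(X),\FPdim(Z))\in\overline{\mathbb{Z}}$.

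To produce a seed object I use a second orthogonality relation. Dividing $\sum_{Z}\FPdim(Z)\,S_{XZ}=0$ (valid since $X\neq\mathbf{1}$) by $\FPdim(X)$ and using $S_{Xg}/\FPdim(X)=1$ for invertible $g$ gives
\[
\FPdim(\mathcal{C}_{pt})+\sum_{Z\ \mathrm{non\text{-}invertible}}\FPdim(Z)\,\frac{S_{XZ}}{\FPdim(X)}=0 .
\]
Every summand lies in $\overline{\mathbb{Z}}$, so I reduce modulo a prime ideal $\mathfrak{p}$ above $p$. Since $p$ is coprime to $\FPdim(\mathcal{C}_{pt})$, the first term is nonzero modulo $\mathfrak{p}$, forcing some non-invertible $Y_{0}$ with $\FPdim(Y_{0})\,S_{XY_{0}}/\FPdim(X)\not\equiv0\pmod{\mathfrak{p}}$; this simultaneously yields $p\nmid\FPdim(Y_{0})$ and $S_{XY_{0}}\neq0$ (and $Y_0\neq X$, as $p\mid\FPdim(X)$).

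Finally I upgrade $Y_{0}$ by Galois conjugation. In the integral, hence pseudo-unitary, setting $\FPdim(Z)=\dim(Z)=S_{\mathbf{1}Z}\in\mathbb{Z}_{>0}$, so the Galois symmetry of modular data, $\sigma(S_{XZ})=\pm S_{X,\hat{\sigma}(Z)}$ for a permutation $\hat{\sigma}$ of the simple objects, forces $\FPdim(\hat{\sigma}(Z))=\FPdim(Z)$; in particular $\lcm$ with $\FPdim(X)$ and invertibility are preserved along the orbit. Because $\beta_{Y_{0}}$ is a nonzero algebraic integer, its field norm is a nonzero rational integer, so $\prod_{\sigma}|\sigma(\beta_{Y_{0}})|\ge1$; as $|\sigma(\beta_{Y_{0}})|=|S_{X,\hat{\sigma}(Y_{0})}|/\lcm(\FPdim(X),\FPdim(Y_{0}))$, some conjugate $Y:=\hat{\sigma}(Y_{0})$ satisfies $|S_{XY}|\ge\lcm(\FPdim(X),\FPdim(Y))$. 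This $Y$ is non-invertible with $p\nmid\FPdim(Y)$, and combining with the identity of the first paragraph rearranges to the claimed inequality. I expect the Galois step, namely pinning down the signed permutation action and the resulting invariance of $\FPdim$, to be the main technical point; the remainder is linear algebra and divisibility over $\overline{\mathbb{Z}}$.
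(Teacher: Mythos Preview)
Your proof is correct and follows essentially the same route as the paper: the same orthogonality relation $\sum_Z \FPdim(Z)\,s_{X,Z}=0$ reduced modulo $p$ to produce the seed object, the same B\'ezout/$\lcm$ divisibility for $s_{X,Y_0}$, and the same column-norm identity with the invertible contributions peeled off. The only packaging difference is in the Galois step: the paper applies $\sigma$ to the full norm identity, using that every term $|s_{X,Z}/\FPdim(X)|^{2}$ is a totally positive cyclotomic integer, whereas you invoke the Galois permutation $\hat\sigma$ on simple objects (together with the $\FPdim$-invariance forced by integrality) to locate the specific $Y=\hat\sigma(Y_{0})$; both arguments encode the same norm bound and yield the identical inequality.
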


The proof of Theorem~\ref{thm:ENOcrit2}, presented in~\S\ref{sec:proof}, builds on several Galois-theoretic preliminaries developed in~\S\ref{sec:gal}. The implementation of Theorem~\ref{thm:ENOcrit2} has been fully automated in \textsf{SageMath} (see~\S\ref{sec:sage}).

The strength of our criterion is immediately demonstrated by two major corollaries proved in \S\ref{sec:appl}:

\begin{corollary} \label{cor:r14}
Every non-pointed integral modular fusion category of rank~$14$ is equivalent to some $\mathcal{Z}(\VVec(A_4,\omega))$.
\end{corollary}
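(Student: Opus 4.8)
The plan is to combine the exhaustive enumeration of rank-$14$ candidate integral modular data from~\cite{ABPP} with the new obstruction of Theorem~\ref{thm:ENOcrit2}. Any non-pointed integral modular fusion category $\mathcal{C}$ of rank~$14$ realizes one of the finitely many candidate types produced in~\cite{ABPP}, so it suffices to show that every such type other than the one attached to $\mathcal{Z}(\VVec(A_4,\omega))$ is non-categorifiable, and then to invoke the known realization and uniqueness for the surviving type.

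First I would extract, for each candidate type, the two pieces of data needed to run the criterion: the integer $\FPdim(\mathcal{C}_{pt})$, which equals the number of invertible simple objects, and the adjoint subcategory $\mathcal{C}_{ad}$, whose simple objects are read off from the universal grading encoded in the modular data (recall that for a modular category $\FPdim(\mathcal{C}_{ad})=\FPdim(\mathcal{C})/\FPdim(\mathcal{C}_{pt})$, with $\mathcal{C}_{ad}$ the trivial graded component). With this in hand, for each candidate I would search for a witness: a non-invertible simple $X\in\mathcal{C}_{ad}$ together with a prime $p\mid\FPdim(X)$ coprime to $\FPdim(\mathcal{C}_{pt})$. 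Whenever such a witness exists, Theorem~\ref{thm:ENOcrit2} forces a non-invertible simple $Y$ with $p\nmid\FPdim(Y)$ and $\lcm(\FPdim(X),\FPdim(Y))^2+\FPdim(X)^2\FPdim(\mathcal{C}_{pt})\le\FPdim(\mathcal{C})$. I would then check whether any admissible $Y$ in the candidate type can satisfy this inequality; since the total dimension of each candidate is fixed and modest, the term $\FPdim(X)^2\FPdim(\mathcal{C}_{pt})$ together with the $\lcm$-square typically overshoots $\FPdim(\mathcal{C})$, so the inequality fails for every admissible $Y$ and the candidate is eliminated.

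This search is finite and is exactly what the \SageMath\ implementation of~\S\ref{sec:sage} automates; running it over the full list should leave only the type(s) realized by $\mathcal{Z}(\VVec(A_4,\omega))$, of which the untwisted double $\mathcal{Z}(\VVec(A_4))$ has multiset $\{1,1,1,3,3,3,3,3,4,4,4,4,4,4\}$ and total dimension $144$. The reason this family survives is structural and worth isolating: for these doubles the invertibles form a group of order $3$, so $\FPdim(\mathcal{C}_{pt})=3$, while the only non-invertible objects of $\mathcal{C}_{ad}$ (the trivial graded component) have $\FPdim=3$ as well; hence no prime divisor of a non-invertible Frobenius--Perron dimension in $\mathcal{C}_{ad}$ is coprime to $\FPdim(\mathcal{C}_{pt})$, and the hypotheses of Theorem~\ref{thm:ENOcrit2} are vacuous. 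Finally I would close the argument by invoking the realization and uniqueness for the surviving data: a non-pointed integral modular category of this type is group-theoretical and equivalent, as a modular category, to some $\mathcal{Z}(\VVec(A_4,\omega))$~\cite{ABPP,CzPl2}.

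The main obstacle I anticipate is not the criterion itself but the bookkeeping that precedes it: correctly identifying $\mathcal{C}_{ad}$ inside each candidate type from the modular data rather than from the bare multiset of Frobenius--Perron dimensions, since the applicability of the criterion hinges on whether the coprimality witness $(X,p)$ genuinely lives in the adjoint subcategory. The other delicate point is the final upgrade from matching modular data to a genuine categorical equivalence, which is where the external classification results are indispensable.
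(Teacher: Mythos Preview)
Your plan has a genuine gap: Theorem~\ref{thm:ENOcrit2} does \emph{not} eliminate all rank-$14$ candidate types other than the $A_4$ one. In the non-perfect case the paper reduces (via \cite[Theorem~4.64]{DrGNO} and \cite[Proposition~11.1]{ABPP}) to seven types, and the \textsf{SageMath} run shows that two of them,
\[
[1,1,2,3,3,24,24,42,42,56,56,56,84,84]\quad\text{and}\quad[1,1,2,3,3,24,120,150,150,200,200,200,300,300],
\]
survive the criterion. For these, every non-invertible $X\in\mathcal{C}_{ad}$ and every admissible prime $p$ admit a $Y$ making the inequality of Theorem~\ref{thm:ENOcrit2} hold, so no contradiction arises. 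Your proposal simply asserts that running the criterion ``should leave only the type(s) realized by $\mathcal{Z}(\VVec(A_4,\omega))$''; it does not.

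The paper disposes of these two survivors by a completely different argument: each contains a fusion subcategory $\mathcal{D}$ of type $[1,1,2,3,3]$, which is shown (using M\"uger's double-commutant identity and the rank-$5$ premodular classification of \cite{BO18}) to be a maximal Tannakian subcategory equivalent to $\Rep(S_4)$. The core $(\mathcal{D}')_{S_4}$ is then an integral modular category of dimension $7^2$ or $5^4$, hence pointed, so $\mathcal{D}'$ would be an $S_4$-equivariantization of a pointed category of rank $49$ or $625$. An orbit-counting bound (Lemma~\ref{lem:orbit}) then forces the rank of $\mathcal{D}'$ to exceed $7$, a contradiction. None of this is captured by your outline, and it is the substantive part of the non-perfect case. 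The perfect case, by contrast, does go exactly as you describe: all $27$ remaining types from \cite{ABPP} are killed by Theorem~\ref{thm:ENOcrit2} alone.
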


In particular, in connection with \cite[Question~1.8]{ABPP}, \cite[Question~2]{ENO11}, and \cite[Question~1.3]{LPRinter}, any non-pointed simple integral modular fusion category, if it exists, must have rank at least~$15$. Moreover, in the odd-dimensional case, it must have rank at least~$27$, since:

\begin{corollary} \label{cor:Oddr25}
Every non-pointed odd-dimensional modular fusion category of rank~$25$ is equivalent to some $\mathcal{Z}(\VVec(C_7 \rtimes C_{3},\omega))$.
\end{corollary}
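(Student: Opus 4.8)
The plan is to classify a non-pointed odd-dimensional modular fusion category $\mathcal{C}$ of rank~$25$ by combining the exhaustive enumeration of candidate modular data in~\cite{ABPP} with the new criterion of Theorem~\ref{thm:ENOcrit2}, while delegating the non-perfect case to~\cite{CzPl2}. First I would recall that every odd-dimensional modular fusion category is integral (see~\cite{CzPl}); hence $\mathcal{C}$ has integral modular data, and its type must occur on the exhaustive list of odd-dimensional rank-$25$ candidate types compiled in~\cite{ABPP}. Since $\mathcal{C}$ is non-pointed, the pointed types are discarded immediately, and I would partition the remaining non-pointed candidates according to whether $\FPdim(\mathcal{C}_{pt})>1$ (non-perfect) or $\FPdim(\mathcal{C}_{pt})=1$ (perfect, equivalently $\mathcal{C}_{ad}=\mathcal{C}$).

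The non-perfect case is settled directly: by~\cite{CzPl2}, every non-pointed non-perfect modular fusion category of rank~$25$ is equivalent to some $\mathcal{Z}(\VVec(C_7\rtimes C_3,\omega))$. One checks that this category genuinely occurs in our setting, being non-pointed, of dimension $|C_7\rtimes C_3|^2=21^2$ (hence odd-dimensional), and of rank~$25$. Consequently the entire remaining content of the corollary reduces to eliminating the perfect non-pointed candidate types, which make up the bulk of the cases left unresolved in~\cite{ABPP}.

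For the perfect case I would apply Theorem~\ref{thm:ENOcrit2} type by type. When $\mathcal{C}$ is perfect we have $\FPdim(\mathcal{C}_{pt})=1$ and $\mathcal{C}_{ad}=\mathcal{C}$, so its hypotheses are automatic: since $\mathcal{C}$ is non-pointed there is a non-invertible simple object $X\in\mathcal{C}_{ad}=\mathcal{C}$, and any prime divisor $p$ of $\FPdim(X)$ is trivially coprime to $\FPdim(\mathcal{C}_{pt})=1$. The theorem then forces a non-invertible simple $Y$ with $p$ coprime to $\FPdim(Y)$ and
\[
\lcm(\FPdim(X),\FPdim(Y))^2+\FPdim(X)^2 \le \FPdim(\mathcal{C}).
\]
For each candidate type I would read off $\FPdim(\mathcal{C})$ and the multiset of simple dimensions, choose the pair $(X,p)$ that maximizes the obstruction, and verify that no admissible $Y$ can satisfy this inequality; each such failure contradicts the theorem and rules out the type. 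This is a finite search, carried out by the \SageMath\ automation of~\S\ref{sec:sage}. With every perfect non-pointed candidate eliminated and the non-perfect ones identified via~\cite{CzPl2}, only the family $\mathcal{Z}(\VVec(C_7\rtimes C_3,\omega))$ survives, which proves the corollary.

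The main obstacle I anticipate is not any individual estimate but the exhaustiveness of the elimination: one must confirm that Theorem~\ref{thm:ENOcrit2} kills \emph{every} perfect candidate on the list, leaving no survivor that would demand a separate ad hoc argument. The delicate points are selecting an optimal $(X,p)$ for types whose simple dimensions share few prime factors (so that the coprimality constraint on $Y$ is restrictive enough), and correctly interfacing with~\cite{CzPl2} so that the non-perfect analysis and the perfect elimination jointly cover the candidate list without gaps.
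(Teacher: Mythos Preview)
Your approach matches the paper's: the non-perfect case is delegated to~\cite{CzPl2}, and the perfect case is handled by applying Theorem~\ref{thm:ENOcrit2} to the perfect candidate types left open in~\cite{ABPP}. Your anticipated obstacle is exactly the right thing to watch: the paper's Remark after Proposition~\ref{prop:R25} records that Theorem~\ref{thm:ENOcrit2} alone leaves $3$ of the original $91$ perfect types standing, but \cite[Theorem~12.8]{ABPP} had already cut the unresolved list down to a (different) set of $3$ types, and Theorem~\ref{thm:ENOcrit2} eliminates all three of those.
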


Moreover, we demonstrate the continuing effectiveness of our method by drastically reducing the number of unresolved cases at rank~$15$, mentioned in \cite[\S 11]{ABPP}, from $9027$ to just $2481$ types. This reduction offers a clear and manageable path forward for the ongoing classification program.

\section{Preliminaries on Galois theory} \label{sec:gal}
\begin{definition}
An algebraic integer $x$ is called \emph{totally positive} if all of its Galois conjugates are positive.
\end{definition}

\begin{definition}
A \emph{cyclotomic integer} is an algebraic integer that lies in a cyclotomic field $\mathbb Q(\zeta_n)$ for some $n$, where $\zeta_n = e^{2\pi i/n}$. Equivalently, a cyclotomic integer is an element of $\mathbb Z[\zeta_n]$ for some $n$.
\end{definition}

\begin{theorem}\label{thm:cyclob}
Let $x$ be a nonzero cyclotomic integer and let $m\in\mathbb{Z}_{>0}$. 
If $\tfrac{x}{m}$ is an algebraic integer, then there exists a Galois conjugate $y$ of $x$ such that $|y| \geq m$.
\end{theorem}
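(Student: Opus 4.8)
The plan is to control all the Galois conjugates of $x$ simultaneously through a single field norm. Fix $n$ with $x\in\ZZ[\zeta_n]$, and set $K=\QQ(\zeta_n)$, which is Galois over $\QQ$ with group $G=\Gal(K/\QQ)$ of order $d=[K:\QQ]$. The conjugates I will produce are the images $\sigma(x)$ for $\sigma\in G$; each such image is a root of the minimal polynomial of $x$ over $\QQ$, hence a genuine Galois conjugate of $x$. The quantity to track is the norm
\[
N(x)=\prod_{\sigma\in G}\sigma(x).
\]

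First I would record two integrality facts. Since $x$ is a nonzero algebraic integer, $N(x)$ is a nonzero rational integer, so $|N(x)|\ge 1$. Next, because $m\in\ZZ_{>0}$ is a rational number it is fixed by every $\sigma\in G$, so $\sigma(x/m)=\sigma(x)/m$ for all $\sigma$, and therefore
\[
N(x/m)=\prod_{\sigma\in G}\frac{\sigma(x)}{m}=\frac{N(x)}{m^{d}}.
\]
By hypothesis $x/m$ is an algebraic integer, and it lies in $K$ (as $x\in K$ and $m\in\QQ\subseteq K$), so $N(x/m)$ is again a rational integer; since $x\neq 0$ it is nonzero, whence $|N(x/m)|\ge 1$.

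Combining the two bounds gives $|N(x)|=m^{d}\,|N(x/m)|\ge m^{d}$. On the other hand $|N(x)|=\prod_{\sigma\in G}|\sigma(x)|$ is a product of $d$ nonnegative reals. If every conjugate satisfied $|\sigma(x)|<m$, this product would be strictly smaller than $m^{d}$, contradicting $|N(x)|\ge m^{d}$. Hence some $\sigma\in G$ satisfies $|\sigma(x)|\ge m$, and $y=\sigma(x)$ is the desired Galois conjugate.

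I do not expect a serious obstacle: the heart of the matter is simply that the product of the conjugates cannot become too small while $x/m$ remains an algebraic integer. The only points needing a word of care are the justification that $N(x/m)$ is a rational integer (the norm of an algebraic integer of a number field is a rational integer) and the observation that rationals are fixed by $G$, which is exactly what lets the factor $m^{d}$ factor cleanly out of the norm.
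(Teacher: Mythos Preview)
Your argument is correct: you compute $N_{K/\QQ}(x/m)=N_{K/\QQ}(x)/m^{d}$, observe that this is a nonzero rational integer (since $x/m$ is an algebraic integer lying in $K$), and conclude by the pigeonhole that some $|\sigma(x)|\ge m$. Every step is sound.

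The paper takes a different route. Instead of working with $N(x)$ directly, it passes to the totally positive integer $z=|x|^{2}=x\overline{x}$, shows that $z/m^{2}$ is again an algebraic integer, and then applies a norm argument to $z$ (Lemma~\ref{lem:bound}) to extract a conjugate $\sigma(z)\ge m^{2}$. The identification $\sigma(z)=|\sigma(x)|^{2}$ requires that complex conjugation commute with every $\sigma$, which the paper secures via the abelianness of $\Gal(\QQ(\zeta_n)/\QQ)$ (Lemma~\ref{lem:cyclotomic-commute}). Your approach bypasses this entirely: you never need complex conjugation to be Galois-equivariant, so your proof in fact establishes the theorem for \emph{any} nonzero algebraic integer $x$, not just cyclotomic ones, whereas the paper explicitly notes after the proof that its method extends only to those $x$ satisfying $\sigma(\overline{x})=\overline{\sigma(x)}$. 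On the other hand, the paper's decomposition into Lemmas~\ref{lem:total} and~\ref{lem:bound} is not wasted effort: both lemmas are invoked again in the proof of Theorem~\ref{thm:ENOcrit2}, where one needs that each summand $\bigl|s_{X,Y}/\FPdim(X)\bigr|^{2}$ is a totally positive cyclotomic integer and that such an integer divisible by $M^{2}$ has a conjugate at least $M^{2}$. So the paper's longer path here is amortized later; your shortcut proves the present theorem more cleanly and more generally, but does not by itself supply those auxiliary lemmas.
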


\begin{proof}
We prove the theorem by reducing to some lemmas:

\begin{lemma}
\label{lem:cyclotomic-commute}
Let $K$ be the cyclotomic field $\mathbb{Q}(\zeta_n)$. 
For every $\sigma\in\Gal(K/\mathbb Q)$ and every
$x\in K$ we have
\[
\sigma(\overline{x})=\overline{\sigma(x)}.
\]
Equivalently, complex conjugation commutes with every element of $\Gal(K/\mathbb Q)$.
\end{lemma}

\begin{proof}
By the Kronecker–Weber theorem, $\mathrm{Gal}(K/\mathbb{Q})$ is Abelian. Hence, the complex conjugation $\zeta_n \mapsto \zeta_n^{-1}$, viewed as an element of this group, is necessarily central. The result follows.
\end{proof}

\begin{lemma} \label{lem:total}
Let $y$ be a nonzero cyclotomic integer. Then $z = y\overline{y} = |y|^2$ is a totally positive algebraic integer.
\end{lemma}

\begin{proof}
Since $y$ is an algebraic integer, so is its complex conjugate $\overline{y}$; hence $z=y\overline{y}$ is an algebraic integer. 

By Lemma \ref{lem:cyclotomic-commute}, for each Galois automorphism $\sigma$ we have
\[
\sigma(z) = \sigma(y)\,\overline{\sigma(y)} = |\sigma(y)|^2.
\]
Each value $|\sigma(y)|^2$ is positive (since $y\neq 0$). 
Thus all conjugates of $z$ are positive, i.e.\ $z$ is totally positive.
\end{proof}

\begin{lemma} \label{lem:bound} 
Let $x$ be a totally positive algebraic integer and let $m\in\mathbb{Z}_{>0}$. 
If $\tfrac{x}{m}$ is an algebraic integer, then there exists a conjugate $y$ of $x$ such that $y \geq m$.
\end{lemma}

\begin{proof}
Let $x_1,\dots,x_d$ be the (positive) conjugates of $x$. Since $\tfrac{x}{m}$ is an algebraic integer, its norm
\[
N\!\left(\frac{x}{m}\right) = \prod_{i=1}^d \frac{x_i}{m} = \frac{N(x)}{m^d}
\]
is a nonzero integer. If all conjugates satisfied $x_i<m$, then each factor $x_i/m$ lies in $(0,1)$, giving
\[
0 < \prod_{i=1}^d \frac{x_i}{m} < 1,
\]
a contradiction. Therefore some conjugate $x_i$ satisfies $x_i \ge m$.
\end{proof}

We now complete the proof of Theorem~\ref{thm:cyclob}. By Lemma~\ref{lem:total}, $z := |x|^2$ is totally positive, while $z/m^2 = |x/m|^2$ is an algebraic integer. By Lemma~\ref{lem:bound}, $z$ has a conjugate $z_i \ge m^2$. Since the conjugates of $z$ are exactly $|\sigma(x)|^2$, some conjugate $y = \sigma(x)$ satisfies $|y|^2 \ge m^2$, i.e.~$|y| \ge m$.
\end{proof}

Observe that Theorem \ref{thm:cyclob} extends to all algebraic integer $x$ such that $\sigma(\overline{x})=\overline{\sigma(x)}$ for all $\sigma$.

\begin{lemma} \label{lem:lcm}
Let \(x\) be an algebraic integer and let \(m,n \in \mathbb{Z}\setminus\{0\}\).
If both \(x/m\) and \(x/n\) are algebraic integers, then so is $x/\lcm(m,n)$.
\end{lemma}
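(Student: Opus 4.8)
The plan is to exploit the fact that the algebraic integers form a ring, combined with the classical arithmetic identities relating $\gcd$ and $\lcm$. Write $d=\gcd(m,n)$ and $L=\lcm(m,n)$, both taken positive, so that $dL=|mn|$. The guiding observation is that, although $1/L$ itself cannot be obtained from $1/m$ and $1/n$ by integer combinations, the numerator $d$ can: by Bézout's identity there exist integers $a,b$ with $am+bn=d$.

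First I would record the relevant Bézout coefficients and then perform the key algebraic manipulation. Since $L=|mn|/d$, we have $1/L = d/|mn| = \pm\, d/(mn)$, and substituting $d=am+bn$ gives
\[
\frac{x}{L} \;=\; \pm\,\frac{x(am+bn)}{mn} \;=\; \pm\left(a\,\frac{x}{n}+b\,\frac{x}{m}\right).
\]
This identity is the heart of the argument: it expresses $x/L$ as an explicit $\mathbb{Z}$-linear combination of the two quantities $x/m$ and $x/n$ that are assumed to be algebraic integers.

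To conclude, I would invoke the standard fact that the algebraic integers form a subring of $\overline{\mathbb{Q}}$, hence are closed under addition and under multiplication by ordinary integers. Applying this to the displayed identity, with $a,b\in\mathbb Z$ and $x/m,\,x/n$ algebraic integers, shows that $x/L=x/\lcm(m,n)$ is an algebraic integer, as claimed.

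There is no genuine obstacle here beyond spotting the correct combination: the only subtlety is that one must combine $x/m$ and $x/n$ with the \emph{Bézout} coefficients for $d=\gcd(m,n)$ rather than attempting a naive direct comparison, and then use $dL=|mn|$ to clear denominators. Minor care with the sign of $mn$ versus the positive conventions for $\gcd$ and $\lcm$ is the only bookkeeping required, and it is absorbed harmlessly into the $\pm$.
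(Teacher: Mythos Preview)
Your proof is correct and follows essentially the same approach as the paper: both use B\'ezout's identity $am+bn=\gcd(m,n)$ together with $\gcd(m,n)\cdot\lcm(m,n)=|mn|$ to exhibit $x/\lcm(m,n)$ as a $\mathbb{Z}$-linear combination of $x/m$ and $x/n$, and then invoke the ring structure of the algebraic integers. Your placement of the coefficients, $a\,\tfrac{x}{n}+b\,\tfrac{x}{m}$, is in fact the correct one for the stated B\'ezout relation.
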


\begin{proof}
Let \(g=\gcd(m,n)\). By Bézout's identity, there exist integers \(u,v\) such that $um + vn = g$.
Since the set of algebraic integers is a \(\mathbb{Z}\)-module, any integral linear combination of algebraic integers is again an algebraic integer. In particular,
\[
u\frac{x}{m} + v\frac{x}{n}
= x\!\left(\frac{u}{m}+\frac{v}{n}\right)
= x\cdot\frac{un+vm}{mn}
= x\cdot\frac{g}{mn}
\]
is an algebraic integer. Noting that
\[
\frac{g}{mn} = \pm\frac{1}{\lcm(m,n)},
\]
we deduce that \(x/\lcm(m,n)\) is also an algebraic integer.
\end{proof}

\section{Proof of Theorem \ref{thm:ENOcrit2}} \label{sec:proof}

\begin{proof}

Consider the orthogonality relation between the columns \((s_{X,Y})_{Y}\) and \((s_{1,Y})_{Y} = (\FPdim(Y))_{Y}\) of the \(S\)-matrix (\cite[Proposition 8.14.2]{EGNO15}):
\[
\sum_{Y \in \mathcal{O}(\mathcal{C})} \frac{s_{X,Y}}{\FPdim(X)} \FPdim(Y) = 0.
\]
There exists a non-invertible simple object \(Y_1\) such that \(p\) is coprime to \(\FPdim(Y_1)\) and \(s_{X,Y_1} \neq 0\). Indeed, otherwise the only possible nonzero summands would come from the invertible objects \(Y\), for which \(s_{X,Y} = \FPdim(X)\) since \(X \in \mathcal{C}_{ad} = \mathcal{C}_{pt}'\) by \cite[Corollary~8.22.8]{EGNO15}, and from simple objects \(Y\) with \(p \mid \FPdim(Y)\). As \(\frac{s_{X,Y}}{\FPdim(X)}\) is an algebraic integer by \cite[Theorem~8.13.11(ii)]{EGNO15}, this implies that
\[
\sum_{Y \in \mathcal{O}(\mathcal{C})} \frac{s_{X,Y}}{\FPdim(X)} \FPdim(Y) \;\equiv\; \FPdim(\mathcal{C}_{pt}) \pmod{p},
\]
a contradiction, since $p$ is coprime to $\FPdim(\mathcal{C}_{pt})$. Since $S$ is symmetric, both ratios
\[
\frac{s_{X,Y_1}}{\FPdim(X)} 
\qquad\text{and}\qquad
\frac{s_{X,Y_1}}{\FPdim(Y_1)}
\]
are algebraic integers. Hence, by Lemma \ref{lem:lcm}
\[
\frac{s_{X,Y_1}}{\lcm(\FPdim(X),\FPdim(Y_1))}
\]
is also an algebraic integer. In particular,
\(
\frac{s_{X,Y_1}}{\FPdim(X)}
\)
is divisible by
\[
M:=\frac{\lcm(\FPdim(X),\FPdim(Y_1))}{\FPdim(X)}.
\]

Next, consider the column norm identity (\cite[Proposition 8.14.2]{EGNO15}):
\begin{equation} \label{eq:norm2}
\sum_{Y \in \mathcal{O}(\mathcal{C})} \left| \frac{s_{X,Y}}{\FPdim(X)} \right|^2 
= \frac{\FPdim(\mathcal{C})}{\FPdim(X)^2}.
\end{equation}
By \cite[Theorem 8.13.11(ii) and Theorem 8.14.7]{EGNO15} and Lemma \ref{lem:total}, every summand on the left is a totally positive cyclotomic integer. Each term corresponding to \(Y\) invertible equals \(1\), while
\[
x := \left| \frac{s_{X,Y_1}}{\FPdim(X)} \right|^2
\]
is totally positive cyclotomic integer divisible by $M^2$. By Lemma \ref{lem:bound}, there exists a Galois automorphism \(\sigma\) such that \(\sigma(x) \geq M^2\). Applying \(\sigma\) to \eqref{eq:norm2} yields
\[
\frac{\FPdim(\mathcal{C})}{\FPdim(X)^2} \geq \FPdim(\mathcal{C}_{pt}) + M^2.
\]
Thus
\[
\frac{\FPdim(\mathcal{C})}{\FPdim(X)^2} \geq \FPdim(\mathcal{C}_{pt}) + \left(\frac{\lcm(\FPdim(X),\FPdim(Y_1))}{\FPdim(X)}\right)^2,
\]
The  result follows by multiplying both side by $\FPdim(X)^2$.
\end{proof}


\section{Applications} \label{sec:appl}
\subsection{Proof of Corollary \ref{cor:r14}}
Let us divide the proof into the non-perfect and the perfect cases:
\begin{proposition}
A non-pointed non-perfect integral modular fusion category of rank $14$ is equivalent to $\mathcal{Z}(\VVec(A_4, \omega))$.
\end{proposition}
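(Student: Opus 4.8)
The plan is to run the exhaustive list of rank-$14$ candidate types compiled in \cite{ABPP} through the new criterion Theorem~\ref{thm:ENOcrit2}. Each such type records the multiset of Frobenius--Perron dimensions $\{d_1,\dots,d_{14}\}$ of the simple objects of a hypothetical integral modular category $\mathcal{C}$; the non-pointed hypothesis forces some $d_i>1$, while the non-perfect hypothesis forces the number $n:=\FPdim(\mathcal{C}_{pt})$ of invertible objects to satisfy $n>1$. Because $\mathcal{C}$ is modular, $\mathcal{C}_{ad}=\mathcal{C}_{pt}'$ has $\FPdim(\mathcal{C}_{ad})=\FPdim(\mathcal{C})/n$, and I would first discard every type for which this quotient fails to be a positive integer or clashes with the standard arithmetic tests already imposed in \cite{ABPP}.

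Next I would determine, for each surviving type, which non-invertible simple objects are forced into $\mathcal{C}_{ad}$. For a modular category the universal grading group is $U(\mathcal{C})\cong\widehat{G(\mathcal{C})}$, so $\mathcal{C}$ is faithfully graded by $U(\mathcal{C})$ with $\mathcal{C}_{ad}$ the trivial component and every homogeneous component of dimension $\FPdim(\mathcal{C})/n$. Enumerating the distributions of the $d_i$ into $n$ components of equal Frobenius--Perron dimension that are compatible with this grading pins down the admissible dimensions of $X\in\mathcal{C}_{ad}$. Whenever a non-invertible $X$ is thereby forced into $\mathcal{C}_{ad}$ and carries a prime divisor $p\mid\FPdim(X)$ coprime to $n$, I would invoke Theorem~\ref{thm:ENOcrit2}: it yields a non-invertible $Y$ with $p\nmid\FPdim(Y)$ and $\lcm(\FPdim(X),\FPdim(Y))^2+\FPdim(X)^2\,n\le\FPdim(\mathcal{C})$. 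Minimizing the left-hand side over the dimensions available in the given type and comparing with $\FPdim(\mathcal{C})$ eliminates the type the moment the inequality becomes unsatisfiable. This is exactly the test automated in \S\ref{sec:sage}, and I expect it to eliminate every candidate but one.

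The surviving type is that of $\mathcal{Z}(\VVec(A_4,\omega))$: here $\FPdim(\mathcal{C})=144$, $n=3$, and the dimension multiset is $\{1,1,1,3,3,3,3,3,4,4,4,4,4,4\}$. It is worth isolating the structural reason this type escapes the criterion. The $C_3$-grading places the six objects of dimension $4$ (those lying over the two $3$-cycle classes of $A_4$) outside the trivial component, so the non-invertible objects of $\mathcal{C}_{ad}$ all have dimension $3$; since the only prime dividing $3$ is $3=n$ itself, no prime divisor of $\FPdim(X)$ is ever coprime to $\FPdim(\mathcal{C}_{pt})$, and the hypothesis of Theorem~\ref{thm:ENOcrit2} can never be met. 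To conclude, I would exhibit $\mathcal{Z}(\VVec(A_4,\omega))$ as a categorification of this modular data for the cocycles $\omega$ pulled back along $A_4\twoheadrightarrow C_3$, and then settle uniqueness: any integral modular category of dimension $12^2$ realizing this type should be shown to be group-theoretical, hence a center $\mathcal{Z}(\VVec(G,\omega))$ with $|G|=12$, and the presence of exactly three invertibles (abelianization of order $3$) singles out $G=A_4$ among the groups of order $12$.

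I expect the main obstacle to lie entirely outside the automated use of the criterion. First, the forcing step above is delicate: Theorem~\ref{thm:ENOcrit2} only bites once a non-invertible $X$ is provably in the trivial graded component, and the $A_4$-center shows how a type can survive purely because its adjoint non-invertibles have all their prime factors among those of $n$. Second, for the single surviving type the criterion is silent --- it is only necessary --- so both the existence of the categorification and its uniqueness must be secured by separate structural input, and it is this identification with $\mathcal{Z}(\VVec(A_4,\omega))$ that carries the genuine weight of the non-perfect case.
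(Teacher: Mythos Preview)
Your overall strategy aligns with the paper's, but your expectation that Theorem~\ref{thm:ENOcrit2} eliminates every non-$A_4$ candidate is incorrect, and this is the decisive gap. After the reduction supplied by \cite[Theorem~4.64]{DrGNO} and \cite[Proposition~11.1]{ABPP} (which already handles the identification with $\mathcal{Z}(\VVec(A_4,\omega))$, so your separate uniqueness discussion is superfluous), seven types remain to exclude. The criterion disposes of five of them, but the two types
\[
[1,1,2,3,3,24,24,42,42,56,56,56,84,84],\qquad [1,1,2,3,3,24,120,150,150,200,200,200,300,300]
\]
survive. The reason is structural: each contains a non-invertible simple object of dimension~$2$, coprime to every odd prime~$p$, so for any admissible $X\in\mathcal{C}_{ad}$ the choice $Y$ with $\FPdim(Y)=2$ keeps $\lcm(\FPdim(X),2)$ small and the inequality of Theorem~\ref{thm:ENOcrit2} is comfortably satisfied.

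The paper excludes these two types by an argument you do not anticipate. Both types force a fusion subcategory $\mathcal{D}$ of type $[1,1,2,3,3]$, which is shown to be maximal Tannakian and isomorphic to $\Rep(S_4)$. One then passes to the core (the de-equivariantization of $\mathcal{D}'$ by $S_4$), an integral modular category of dimension $\FPdim(\mathcal{C})/\FPdim(\mathcal{D})^2$, equal to $7^2$ or $5^4$ respectively; in either case it must be pointed. Hence $\mathcal{D}'$, of rank~$7$, would be an $S_4$-equivariantization of a pointed category of rank $49$ or $625$, and an elementary orbit-counting bound (via \cite{BN13}) shows the rank of any such equivariantization strictly exceeds~$7$, a contradiction. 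This structural detour---Tannakian subcategory, de-equivariantization to a pointed core, orbit bound on the equivariantization---is the missing idea in your plan.
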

\begin{proof}
By \cite[Theorem 4.64]{DrGNO} and \cite[Proposition 11.1]{ABPP}, we are reduced to exclude the following seven types:
\begin{verbatim}
L=[[1,1,2,3,3,24,24,42,42,56,56,56,84,84],
[1,1,2,3,3,24,120,150,150,200,200,200,300,300],
[1,1,24,24,36,40,45,45,90,90,90,180,180,180],
[1,1,40,84,90,126,315,315,504,630,840,1260,1260,1260],
[1,1,45,45,90,140,168,168,630,630,840,1260,1260,1260],
[1,1,60,60,84,140,189,189,540,1260,1260,1890,1890,1890],
[1,1,90,90,90,108,140,378,945,945,1260,1890,1890,1890]]
\end{verbatim}
All but the first two types are excluded by Theorem~\ref{thm:ENOcrit2}. The first exclusion is explained below.
\begin{lemma} \label{lem:first}
There is no modular fusion category of type $t=[1,1,24,24,36,40,45,45,90,90,90,180,180,180]$.
\end{lemma}
\begin{proof}
Let $\mathcal{C}$ be a modular fusion category. By \cite[Proposition~2.3 and Lemma~3.31]{DrGNO}, the adjoint subcategory $\mathcal{C}_{ad}$ is the neutral component of the universal grading, which is a faithful grading by the group $G$ of invertible objects. Hence, by \cite[Theorem~3.5.2]{EGNO15}, $\FPdim(\mathcal{C}_{ad}) = \FPdim(\mathcal{C})/|G|$. 

Assume that $\mathcal{C}$ has type $t$. Then $|G| = 2$ and $\mathcal{C}_{ad}$ must have type $[1,1,24,24,36,40,45,45,90,90,90,180]$,
as this is the only possible type for a fusion subcategory of dimension $\FPdim(\mathcal{C})/2$.

Let $X \in \mathcal{O}(\mathcal{C})$ be a non-invertible simple object with $\FPdim(X) = 36$. 
Take $p = 3$, a prime divisor of $\FPdim(X)$ that is coprime to $\FPdim(\mathcal{C}_{pt}) = 2$. 
Any non-invertible object $Y \in \mathcal{O}(\mathcal{C})$ whose Frobenius--Perron dimension is coprime to $p = 3$ must satisfy $\FPdim(Y) = 40$. 
However,
\begin{align*}
&\lcm(\FPdim(X), \FPdim(Y))^2 + \FPdim(X)^2 \FPdim(\mathcal{C}_{pt}) \\
  = &\lcm(36,40)^2 + 36^2 \times 2
  = 132192 > 129600 = \FPdim(\mathcal{C}),
\end{align*}
which contradicts Theorem~\ref{thm:ENOcrit2}.
\end{proof}
The SageMath function \verb|ENOcrit| from \S\ref{sec:sage} automates the exclusion using Theorem~\ref{thm:ENOcrit2}. The following computation confirms that all remaining types are excluded, except the first two.
\begin{verbatim}
sage: for l in L:
....:     if ENOcrit(l):
....:         print(l)
[1,1,2,3,3,24,24,42,42,56,56,56,84,84]
[1,1,2,3,3,24,120,150,150,200,200,200,300,300]
\end{verbatim}
Assume that $\mathcal{C}$ is a modular fusion category of one of these two types. Then $\mathcal{C}$ contains a fusion subcategory $\mathcal{D}$ of type $[1,1,2,3,3]$.
\begin{lemma}
The fusion subcategory $\mathcal{D}$ is maximal Tannakian, and isomorphic to $\Rep(S_4)$.
\end{lemma}
\begin{proof}
By \cite[Theorem~3.2]{Mug03} or \cite[Theorem~3.14]{DrGNO},
\begin{equation} \label{eq:DD'}
\FPdim(\mathcal{D})\, \FPdim(\mathcal{D}') = \FPdim(\mathcal{C}).
\end{equation}
In each case, there exists a unique possible type for a fusion subcategory of dimension 
\(\FPdim(\mathcal{C}) / \FPdim(\mathcal{D})\): it is 
\([1,1,2,3,3,24,24]\) in the first case and 
\([1,1,2,3,3,24,120]\) in the second. 
Consequently, \(\mathcal{D}'\) must be of that type, and hence has rank \(7\).
In particular, \(\mathcal{D} \cap \mathcal{D}' = \mathcal{D}\), hence \(\mathcal{D}\) is symmetric.  
According to the classification of premodular categories of rank \(5\) in \cite[Theorem~I.1]{BO18}, 
a symmetric fusion category of type \([1,1,2,3,3]\) is Tannakian and isomorphic to \(\Rep(S_4)\).

Regarding maximality, assume that \(\mathcal{E}\) is a Tannakian subcategory properly containing \(\mathcal{D}\).  
Then \(\mathcal{E}'\) is properly contained in \(\mathcal{D}'\) 
(since, by \cite[Theorem~3.2]{Mug03}, we have \(\mathcal{E}'' = \mathcal{E}\) and \(\mathcal{D}'' = \mathcal{D}\) 
since \(\mathcal{C}\) is modular).  
If \(\mathcal{E}' = \mathcal{E}\), then \(\mathcal{E}\) is Lagrangian, and hence 
\(\FPdim(\mathcal{C}) = \FPdim(\mathcal{E})^2\); see \cite[Definition~4.57 and Theorem~4.64]{DrGNO}.  
The only possible case would then be that \(\mathcal{E}\) has type \([1,1,2,3,3,24]\), 
but a direct computation in \textsf{GAP} shows that there is no finite group with these character degrees:
\begin{verbatim}
gap> Filtered(AllSmallGroups(600), 
     g -> SortedList(CharacterDegrees(g)) = [[1,2],[2,1],[3,2],[24,1]]);
[  ]
\end{verbatim}
Therefore, we obtain the chain of proper inclusions
\[
\mathcal{D} \subsetneq \mathcal{E} \subsetneq \mathcal{E}' \subsetneq \mathcal{D}',
\]
a contradiction, since \(\mathcal{D}'\) consists of \(\mathcal{D}\) together with only two additional simple objects.  
\end{proof}

By \cite[Corollary~5.15]{DrGNO}, the core of \(\mathcal{C}\) (that is, the de-equivariantization of \(\mathcal{D}'\)) 
is an integral modular fusion category, and by \eqref{eq:DD'} and \cite[Proposition 4.26]{DrGNO}, its global dimension is $\FPdim(\mathcal{C}) / \FPdim(\mathcal{D})^2 = 7^2\) or \(5^4\), respectively.  
However, an integral modular fusion category of global dimension \(p^2\), for a prime \(p\), 
is necessarily pointed, as it is half-Frobenius \cite[Proposition~8.14.6]{EGNO15}.  
It is also pointed when the global dimension equals $p^4\), by \cite[Lemma~4.11]{DN18}.  
Hence the rank-\(7\) fusion category \(\mathcal{D}'\) must be an \(S_4\)-equivariantization 
of a pointed fusion category of rank \(7^2\) or \(5^4\), respectively.  
We now show that this is impossible.

\begin{lemma} \label{lem:orbit}
Let $G$ and $H$ be finite groups. Then the rank of the $G$-equivariantization $\VVec(H,\omega)^G$ is at least the number of orbits of the corresponding group automorphism action of $G$ on $H$.
\end{lemma}
\begin{proof}
Immediate from \cite[Corollary 2.13]{BN13}.
\end{proof}

A group automorphism action of $G$ on $H$ is a group homomorphism from $G$ to $\Aut(H)$, hence an isomorphism between a quotient of $G$ and a subgroup of $\Aut(H)$.  
We verified with \textsf{GAP} that the quotients of $S_4$ are isomorphic to $C_1$, $C_2$, $S_3$, and $S_4$:
\begin{verbatim}
gap> List(NormalSubgroups(SymmetricGroup(4)), 
     H -> StructureDescription(FactorGroup(SymmetricGroup(4), H)));
[ "1", "C2", "S3", "S4" ]
\end{verbatim}
and that, if $|H| = 7^2$ (so $H$ Abelian), then $\Aut(H)$ has no subgroup isomorphic to $S_4$:
\begin{verbatim}
gap> List(AllSmallGroups(49), g -> Filtered(
     ConjugacyClassesSubgroups(AutomorphismGroup(g)),
     c -> StructureDescription(Representative(c)) = "S4"));
[ [  ], [  ] ]
\end{verbatim}

Since the order of a proper quotient of $S_4$ is at most $|S_3| = 6$, the number of orbits of such a group acting on a group of order $49$ is at least $49/6>7$.  
The case $|H| = 5^4$ is even simpler: the number of orbits of a group of order at most $24$ acting on a group of order $625$ is at least $625/24>7$.  
Both $49/6$ and $625/24$ are strictly greater than $7$, the rank of $\mathcal{D}'$. Therefore, by Lemma~\ref{lem:orbit}, $\mathcal{D}'$ cannot be an $S_4$-equivariantization of a pointed fusion category of rank $7^2$ or $5^4$.  
This contradiction excludes the two remaining cases.
\end{proof}

\begin{proposition}
There is no non-trivial perfect integral modular fusion categories up to rank $14$.
\end{proposition}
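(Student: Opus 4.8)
The plan is to combine a finite enumeration of candidate types with Theorem~\ref{thm:ENOcrit2}. The first step records what perfectness buys us: a perfect category has trivial group of invertible objects, so $\FPdim(\mathcal{C}_{pt}) = 1$, and since $\mathcal{C}$ is modular we have $\mathcal{C}_{ad} = \mathcal{C}_{pt}' = \mathcal{C}$ by \cite[Corollary~8.22.8]{EGNO15}. Consequently every non-invertible simple object $X$ lies in $\mathcal{C}_{ad}$, and every prime $p \mid \FPdim(X)$ is automatically coprime to $\FPdim(\mathcal{C}_{pt}) = 1$, so Theorem~\ref{thm:ENOcrit2} applies to \emph{every} non-invertible $X$ and \emph{every} prime divisor of its dimension. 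Two clean consequences follow at once: first, no prime can divide all the non-invertible dimensions, since otherwise the object $Y$ with $p \nmid \FPdim(Y)$ produced by the theorem could not exist; and second, applying the theorem to an object of maximal dimension and using $\lcm(\FPdim(X),\FPdim(Y)) \ge \FPdim(X)$ yields $2\,\FPdim(X)^2 \le \FPdim(\mathcal{C})$ for every non-invertible $X$.

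The second step reduces the problem to finitely many candidate types $[1, d_2, \dots, d_r]$ with $2 \le r \le 14$ and all $d_i \ge 2$, subject to the standard necessary conditions $\FPdim(\mathcal{C}) = 1 + \sum_{i \ge 2} d_i^2$ and $d_i^2 \mid \FPdim(\mathcal{C})$ for all $i$, the latter being the integrality of the column-norm identity~\eqref{eq:norm2}. For $r \le 13$ the complete classification of \cite{ABPP} contains no non-trivial perfect category; for $r = 14$ the exhaustive candidate list of \cite[Proposition~11.1]{ABPP} is finite, and I would extract from it the perfect sublist, namely those types containing a single~$1$.

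The third step runs the criterion: for each remaining candidate I apply Theorem~\ref{thm:ENOcrit2} in the contrapositive form automated by the function \texttt{ENOcrit} of \S\ref{sec:sage}, seeking a non-invertible $X$ and a prime $p \mid \FPdim(X)$ for which no dimension occurring in the type can serve as $\FPdim(Y)$ under the joint constraints $p \nmid \FPdim(Y)$ and $\lcm(\FPdim(X),\FPdim(Y))^2 + \FPdim(X)^2 \le \FPdim(\mathcal{C})$; any such type is then excluded. The main obstacle I anticipate is that, exactly as in the non-perfect case, one or two candidates may survive this purely arithmetic test and require a genuinely categorical argument: I would then isolate a forced fusion subcategory $\mathcal{D}$, analyze its Müger centralizer $\mathcal{D}'$ via \cite{Mug03} and \cite{DrGNO}, and derive a contradiction from the structure of the core (de-equivariantization) together with the rank bound of Lemma~\ref{lem:orbit}. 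The hypothesis $\mathcal{C}_{pt} = \VVec$ should make this fallback sharper than in the non-perfect setting, since it rules out every invertible object besides the unit and thereby severely constrains the admissible subcategories and their centralizers.
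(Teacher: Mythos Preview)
Your proposal is correct and follows essentially the same route as the paper: reduce to the finite list of perfect rank-$\le 14$ candidate types from \cite[\S 11]{ABPP} (there are $27$ of them), then eliminate each one via Theorem~\ref{thm:ENOcrit2} using \texttt{ENOcrit}. The fallback categorical argument you anticipate is never needed here---unlike the non-perfect case, all $27$ perfect types are killed outright by the arithmetic criterion, so the proof ends immediately after the computer check.
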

\begin{proof}
As mentioned in \cite[\S 11]{ABPP}, there remain $27$ types to consider, all of which are excluded by Theorem~\ref{thm:ENOcrit2}, as confirmed by the following computation:
\begin{verbatim}
LL=[[1,30,35,63,90,90,126,140,252,315,420,630,630,630],
[1,30,30,30,105,105,105,120,140,168,280,420,420,420],
[1,35,60,105,105,168,210,240,560,560,560,560,840,840],
[1,35,84,108,135,140,252,315,420,1260,1260,1890,1890,1890],
[1,35,105,108,126,135,140,378,420,1260,1260,1890,1890,1890],
[1,40,105,105,168,175,200,350,1050,1050,1400,2100,2100,2100],
[1,45,56,63,63,70,120,120,360,840,840,1260,1260,1260],
[1,50,50,105,105,168,175,210,600,1400,1400,2100,2100,2100],
[1,60,105,150,168,175,200,280,525,1400,1400,2100,2100,2100],
[1,63,105,140,189,280,360,378,1080,2520,2520,3780,3780,3780],
[1,63,63,70,70,180,270,630,756,945,1260,1890,1890,1890],
[1,70,75,84,84,84,84,150,525,525,700,1050,1050,1050],
[1,70,75,105,140,150,168,350,525,1400,1400,2100,2100,2100],
[1,70,75,150,168,175,210,280,525,1400,1400,2100,2100,2100],
[1,70,75,75,168,200,200,300,525,1400,1400,2100,2100,2100],
[1,75,105,105,150,168,210,350,1050,1050,1400,2100,2100,2100],
[1,75,140,150,168,175,175,420,420,1400,1400,2100,2100,2100],
[1,245,270,270,675,882,2450,11025,18900,44100,44100,66150,66150,66150],
[1,245,675,882,2450,2700,11025,13230,13230,44100,44100,66150,66150,66150],
[1,270,1225,2025,2268,4900,5670,33075,56700,132300,132300,198450,198450,198450],
[1,315,490,810,980,1620,2268,8820,19845,19845,26460,39690,39690,39690],
[1,315,875,882,1125,1960,18000,73500,126000,294000,294000,441000,441000,441000],
[1,350,405,1750,2268,3375,3780,23625,40500,94500,94500,141750,141750,141750],
[1,350,945,1620,1750,2268,4725,23625,40500,94500,94500,141750,141750,141750],
[1,441,675,1372,2700,3430,18522,77175,132300,308700,308700,463050,463050,463050],
[1,490,675,882,1225,2700,6615,14700,14700,44100,44100,66150,66150,66150],
[1,945,1225,1620,2268,4900,40500,165375,283500,661500,661500,992250,992250,992250]]
sage: for t in LL:
....:     if ENOcrit(t):
....:         print(t)
sage: 						# all excluded!
\end{verbatim}
\vspace*{-.5cm}
\end{proof}

\subsection{Proof of Corollary \ref{cor:Oddr25}}
According to \cite[\S 12]{ABPP}, completing the classification of the modular data of odd-dimensional modular fusion categories of rank \(25\) requires addressing the perfect case. This is precisely what the following proposition achieves.

\begin{proposition} \label{prop:R25}
There is no perfect odd-dimensional modular fusion category of rank $25$.
\end{proposition}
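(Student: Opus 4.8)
The plan is to follow the template of the perfect rank-$14$ case treated just above. First I would invoke \cite[\S 12]{ABPP} to extract the finite list $\mathcal{L}$ of candidate types for perfect odd-dimensional modular fusion categories of rank $25$ left unresolved there. Since $\mathcal{C}$ is perfect, its group of invertible objects is trivial, so $\mathcal{C}_{pt} = \VVec$ and $\FPdim(\mathcal{C}_{pt}) = 1$; by \cite[Corollary~8.22.8]{EGNO15} we then have $\mathcal{C}_{ad} = \mathcal{C}_{pt}' = \mathcal{C}$. Thus every non-invertible simple object lies in $\mathcal{C}_{ad}$, and every prime $p$ is vacuously coprime to $\FPdim(\mathcal{C}_{pt}) = 1$. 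This places us in the most favorable regime for Theorem~\ref{thm:ENOcrit2}: for each non-invertible simple $X$ and each prime divisor $p$ of $\FPdim(X)$, the criterion forces a non-invertible simple $Y$ with $p \nmid \FPdim(Y)$ and $\lcm(\FPdim(X),\FPdim(Y))^2 + \FPdim(X)^2 \le \FPdim(\mathcal{C})$.

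Second, I would feed each type $t \in \mathcal{L}$ to the automated procedure \verb|ENOcrit| of \S\ref{sec:sage}, which tests, for every non-invertible entry $X$ of $t$ and every prime $p \mid \FPdim(X)$, whether some coprime non-invertible entry $Y$ of $t$ satisfies the displayed inequality; a type can be categorifiable only if such a $Y$ exists for every admissible pair $(X,p)$. By direct analogy with the perfect rank-$14$ computation, where all $27$ candidate types were eliminated outright, I expect the criterion to clear the entire list $\mathcal{L}$, so that the output of the loop is empty and the proposition follows immediately.

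The only point at which more than a single invocation of the criterion could be required is the possibility that a few types survive \verb|ENOcrit|, exactly as two types survived in the non-perfect rank-$14$ case. Should that happen, I would dispose of each survivor by the structural techniques already deployed above: locate a low-rank fusion subcategory $\mathcal{D}$, use M\"uger's double-centralizer relation \eqref{eq:DD'} to pin down the type of $\mathcal{D}'$, identify $\mathcal{D}$ as Tannakian via the rank-$5$ premodular classification of \cite{BO18}, pass to the core by de-equivariantization, and obstruct the resulting equivariantization through the orbit-counting bound of Lemma~\ref{lem:orbit} together with \GAP\ computations on character degrees and automorphism groups. Because all dimensions here are odd, the parity and divisibility constraints are even tighter than in the rank-$14$ setting, so the main obstacle—if any—would be confined to at most one or two residual types; I anticipate that \verb|ENOcrit| alone suffices to empty $\mathcal{L}$ and thereby establish the proposition.
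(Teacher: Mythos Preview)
Your approach is essentially identical to the paper's: it invokes \cite[Theorem~12.8]{ABPP} to reduce to exactly three candidate types, and then the \texttt{ENOcrit} routine eliminates all three, so no fallback argument is needed. Your contingency plan involving Tannakian subcategories is moot here (and would in any case be awkward in the perfect setting, where $\mathcal{C}_{pt}$ is trivial), but since the criterion alone suffices, the proposal matches the paper's proof.
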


\begin{proof}
  
By \cite[Theorem~12.8]{ABPP}, there remains to consider three possible types, all excluded by Theorem \ref{thm:ENOcrit2}:
\begin{verbatim}
sage: L=[[[1,1],[75,2],[91,4],[175,2],[585,2],[975,2],[2275,2],[4095,2],[6825,8]],
....: [[1,1],[75,2],[91,4],[175,2],[975,2],[2275,2],[2925,4],[6825,8]],
....: [[1,1],[135,4],[165,2],[189,2],[315,2],[385,2],[1155,2],[2079,2],[3465,8]]]
sage: format = lambda T: [t[0] for t in T for i in range(t[1])]
sage: for T in L:
....:     l=format(T)
....:     if ENOcrit(l):
....:         print(l)
sage: 						# all excluded!
\end{verbatim}
\vspace*{-.7cm}
\end{proof}
\begin{remark}
Among the $91$ possible types mentioned in the proof of \cite[Theorem~12.8]{ABPP}, all but three can be directly ruled out by Theorem~\ref{thm:ENOcrit2}, while all but fifteen are excluded by \cite[Theorem~8.7]{ABPP}. Taken together, these two theorems rule out all $91$ possible types.
\end{remark}
\subsection{Discussion about rank 15}

Regarding the rank $15$ case in \cite[\S 11]{ABPP}, there remained $9027$ types to consider ($399$ non-perfect ones and $8628$ perfect ones). After the application of Theorem \ref{thm:ENOcrit2}, there remain only $2481 = 2341+140$ types. Here are few of them:
\begin{itemize}
\item Few non-perfect ones:
\begin{verbatim}
[1,1,1,1,4,4,12,12,36,36,108,108,162,162,162]
[1,1,1,3,12,12,60,60,100,100,100,100,100,100,150]
[1,1,135,135,140,252,1080,1512,1890,5670,5670,7560,11340,11340,11340]
\end{verbatim}
\item Few perfect ones:
\begin{verbatim}
[1,20,20,21,21,35,40,56,70,84,280,280,420,420,420]
[1,21,21,21,24,24,60,140,168,210,280,420,420,420,840]
[1,24,24,56,60,70,105,315,756,945,2520,2520,3780,3780,3780]
[1,25,25,28,28,60,84,280,350,525,1400,1400,2100,2100,2100]
[1,27,27,30,35,40,180,360,504,1080,2520,2520,3780,3780,3780]
[1,28,28,30,35,80,84,105,105,420,420,560,840,840,840]
[1,30,30,30,105,105,105,120,140,168,280,420,420,420,840]
[1,32,35,42,45,45,120,144,840,1440,3360,3360,5040,5040,5040]
[1,35,35,35,42,96,112,360,1120,2520,2520,3360,5040,5040,5040]
[1,36,36,45,70,70,189,270,756,945,2520,2520,3780,3780,3780]
[1,40,42,42,45,105,280,315,336,630,1680,1680,2520,2520,2520]
[1,42,42,60,175,189,280,2700,3150,4725,12600,12600,18900,18900,18900]
[1,45,45,45,56,70,420,504,756,756,2520,2520,3780,3780,3780]
[1,48,63,126,175,240,315,1800,2520,2800,8400,8400,12600,12600,12600]
[1,49,49,56,175,250,294,3000,12250,21000,49000,49000,73500,73500,73500]
\end{verbatim}
\end{itemize}

\section{SageMath codes} \label{sec:sage}
The following code automates the application of Theorem \ref{thm:ENOcrit2}:
\begin{verbatim}		
def ENOcrit(l):
	pt = l.count(1)
	spt=set(Integer(pt).prime_factors())
	d=sum(i**2 for i in l)
	S=list(set(l))
	S.sort()
	M=[]
	MP=ModularPartitions(l)
	for P in MP:
		S0=list(set(P[0]))
		S0.sort()
		if ENOcritInter(S0,S,spt,pt,d):
			M.append(P)
	if len(M)>0 and len(M)<len(MP) and pt>1:
		print('only need to consider the partitions in ', M)
	return len(M)>0	
	
def ENOcritInter(S0,S,spt,pt,d):
	for i in S0[1:]:
		for p in set(Integer(i).prime_factors())-spt:
			c=0
			for j in S[1:]:
				if j%p!=0 and lcm(i,j)**2 + pt*(i**2) <= d:
					c=1
					break
			if c==0:
				return False
	return True			
	
# generate modular partitions of type l
def ModularPartitions(l):
    d = sum(i^2 for i in l)
    p = l.count(1)
    # assert d % p == 0
    if d % p != 0:
        return []
    U = sorted(set(l))  # unique elements in l
    S = [[p.count(u^2) for u in U] for p in gen_mparts([i^2 for i in reversed(l)], d//p)]
    return sorted(sorted(sum(([u] * q for u, q in zip(U, Qi)), [])
            for Qi in Q) for Q in VectorPartitions([l.count(u) for u in U], parts=S))	
		
# generate submultisets of list M (sorted in nonincreasing order) with a given sum s
def gen_mparts(M,s,i=0):
    if s==0:
        yield tuple()
        return
    while i<len(M) and M[i]>s:
        i += 1
    prev = 0
    while i<len(M):
        if M[i]!=prev:
            for p in gen_mparts(M,s-M[i],i+1):
                yield p+(M[i],)
        prev = M[i]
        i += 1
\end{verbatim}
\begin{remark}    
The function \texttt{ModularPartitions} was developed for \cite[\S 8.1]{ABPP}.
\end{remark}

Let us apply \verb|ENOcrit| to the rank-$22$ type of the Drinfeld center $\mathcal{Z}(\Rep(A_5))$ and a rank-$14$ type:
\begin{verbatim}
sage: l1=[1,3,3,4,5,12,12,12,12,12,12,12,12,12,12,15,15,15,15,20,20,20]
sage: ENOcrit(l1)
True
sage: l2=[1,30,35,63,90,90,126,140,252,315,420,630,630,630]
sage: ENOcrit(l2)
False
\end{verbatim}


\begin{thebibliography}{99}

\bibitem{ABPP}
{\sc M.A.~Alekseyev, W.~Bruns, S.~Palcoux, F.V. Petrov}, {\em Classification of integral modular data up to rank 13}, arXiv:2302.01613.

\bibitem{BO18}
{\sc P.~Bruillard, C.M.~Ortiz-Marrero}, {\em Classification of rank 5 premodular categories.} J. Math. Phys. 59 (2018), no. 1, 011702, 8 pp.

\bibitem{BrRo}
{\sc P.~Bruillard, E.C.~Rowell}, {\em Modular categories, integrality and Egyptian fractions.} Proc. Amer. Math. Soc. 140 (2012), no. 4, 1141--1150.

\bibitem{BN13}
{\sc S.~Burciu, S.~Natale}, {\em Fusion rules of equivariantizations of fusion categories}. J. Math. Phys. 54 (2013), no. 1, 013511, 21 pp.

\bibitem{CzPl}
{\sc A.~Czenky, J.~Plavnik}, {\em On odd-dimensional modular tensor categories}, Algebra Number Theory 16 (2022), no. 8, 1919--1939. Corrected version arXiv:2007.01477 (2024).

\bibitem{CzPl2}
{\sc A.~Czenky, W.~Gvozdjak, J.~Plavnik}, {\em Classification of low-rank odd-dimensional modular categories}, J. Algebra 655 (2024), 223–293.

\bibitem{DN18}
{\sc J.~Dong, S.~Natale}, {\em On the classification of almost square-free modular categories}. Algebr. Represent. Theory 21 (2018), no. 6, 1353--1368.

\bibitem{DrGNO}
{\sc V.~Drinfeld, S.~Gelaki, D.~Nikshych, V.~Ostrik}, {\em On braided fusion categories. I}. Selecta Math. (N.S.) 16 (2010), no. 1, 1--119.

\bibitem{EGNO15}
{\sc P.~Etingof, S.~Gelaki, D.~Nikshych, and V.~Ostrik}, {\em Tensor Categories}, American Mathematical Society, (2015).
\newblock Mathematical Surveys and Monographs Volume 205.

\bibitem{ENO11}
{\sc P.~Etingof, D.~Nikshych, and V.~Ostrik}, {\em Weakly group-theoretical and solvable fusion categories}. Adv. Math. 226 (2011), no. 1, 176--205.

\bibitem{GPR24}
{\sc C.~Galindo, J.~Plavnik, E.C.~Rowell}, {\em Integral non-group-theoretical modular categories of dimension $p^2q^2$}. Bull. Belg. Math. Soc. Simon Stevin 31 (2024), no. 4, 516--526. 

\bibitem{LPRinter}
{\sc Z.~Liu, S.~Palcoux and Y.~Ren}, {\em Interpolated family of non-group-like simple integral fusion rings of Lie type}, Internat. J. Math. 34 (2023), no. 6, Paper No. 2350030, 51 pp., DOI: 10.1142/S0129167X23500301

\bibitem{Mug03}
{\sc M.~Müger}, {\em On the structure of modular categories}. Proc. London Math. Soc. (3) 87 (2003), no. 2, 291--308. 

\bibitem{NRW23}
{\sc S.-H.~Ng, E.C.~Rowell, X.-G.~Wen}, {\em Classification of modular data up to rank 12}, arXiv:2308.09670. 

\end{thebibliography}
\end{document}